\newtheorem{theorem}{Theorem}[section]
\newtheorem{lemma}[theorem]{Lemma}
\theoremstyle{definition}
\newtheorem{remark}[theorem]{Remark}
\numberwithin{figure}{section}
\numberwithin{equation}{section}
\begin{document}

\title[]{Geometric Lorenz flows with historic behavior}

\author{Shin Kiriki}
\address{Department of Mathematics, Tokai University, 4-1-1 Kitakaname, 
Hiratuka Kanagawa, 259-1292, Japan.}
\email{kiriki@tokai-u.jp}

\author{Ming-Chia Li}
\address{Department of Applied Mathematics, National Chiao Tung University, Hsinchu 30050, Taiwan}
\email{mcli@math.nctu.edu.tw}

\author{Teruhiko Soma}
\address{Department of Mathematics and Information Sciences, Tokyo Metropolitan
University, Minami-Ohsawa 1-1, Hachioji, Tokyo 192-0397, Japan}
\email{tsoma@tmu.ac.jp}

\subjclass[2010]{
Primary: 37A05,  37C10, 37C40, 37D30}
\keywords{historic behavior, geometric Lorenz flow, Lorenz map}
\date{\today}

\begin{abstract}
We will show that, in the the geometric Lorenz flow, the set of initial states which 
give rise to orbits with historic behavior is residual in a trapping region.
\end{abstract}
\maketitle

Consider a continuous map $\varphi:X\to X$ of a compact space $X$.
We say that the forward orbit $O^+(x,\varphi)=\{x,\varphi(x),\varphi^2(x),\dots\}$ of $x\in X$ has 
\emph{historic behavior} if the 
Birkhoff average
$\lim_{n\to \infty}\frac1{n+1}\sum_{i=0}^n g(\varphi^i(x))$
does not exist for some continuous function $g:X\to \mathbb{R}$.
The notion of historic behavior was introduced by Ruelle \cite{ru}. 
We say that a subset $A$ of $X$ is a \emph{historic initial set} if, 
for any $x\in A$, the forward orbit $O^+(x,\varphi)$ has historic behavior.
Jordan, Naudot and Young \cite{jny} showed that 
the convergence of every higher order average in \cite[p.\,11]{bdv} is totally controlled by the presence of the historic initial sets.

Let $\varphi:\mathbb{S}\to \mathbb{S}$ be the doubling map on the circle 
$\mathbb{S}=\mathbb{R}/\mathbb{Z}$.
Takens \cite{ta2} showed that there exists a residual historic initial set  
in $\mathbb{S}$.
In fact, he presented only one orbit $O^+(x,\varphi)$ which is dense in $\mathbb{S}$ 
and has historic behavior.
Then, by Dowker \cite{do}, there exists a historic initial set which is residual in $\mathbb{S}$.
Dowker's theorem is very useful to show the existence of a residual historic initial set for 
various 1-dimensional maps.
The quenched random dynamics version of Takens' result is obtained by Nakano \cite{na}.
Takens' argument is applicable also to the Lorenz map $\alpha:[-1,1]\to [-1,1]$, see
Remark \ref{r_Lorenz_map}.
Many of such residual sets would have zero Lebesgue measure.
On the other hand, for any integer $r$ with $2\leq r<\infty$, Kiriki and Soma \cite{ks} proved that there exists a two-dimensional 
diffeomorphism which is arbitrarily $C^r$ close to a diffeomorphism with a quadratic homoclinic tangency 
and has a non-empty open historic initial set $D$.
Note that the open set $D$ has positive 2-dimensional Lebesgue measure.
Hence, in particular,  
this result gives an answer to Takens' Last Problem \cite{ta2} 
in the $C^2$-persistent way (see \cite[Section 6.1]{pt} for the 
definition).
Moreover, it suggests that, in certain classes of 2-dimensional diffeomorphisms, 
the historic initial set is not negligible from the physical point of view.

In this paper, we will study the historic behavior on flow dynamics.
Let $x(t)_{t\geq 0}$ be a forward orbit of a flow on a compact space $X$.
Then we say that the orbit has \emph{historic behavior} if the the time average
$$\lim_{t\to\infty}\frac1{t}\int_0^tg(x(s))\,ds$$
does not exist for some continuous function $g:X\to\mathbb{R}$.
See Takens \cite{ta1} for the definition.
Bowen's example given in \cite{ta1} is a flow on $\mathbb{R}^2$ which has a heteroclinic loop consisting 
of a pair of saddle points and two arcs connecting them.
The loop bounds an open disk $D$ in $\mathbb{R}^2$ which contains a singular point $p$ 
of the flow such that the complement $D\setminus \{p\}$ is a historic initial set.
However, this example is fragile in the sense that it is not persistent under perturbations 
which break the saddle connections.
Very recently, Labouriau and Rodrigues \cite{lr} present a persistent class of 
differential equations on $\mathbb{R}^3$ exhibiting historic behavior for an open set 
of initial conditions, which answers Takens' Last Problem for 3-dimensional flows.

Here we consider the geometric Lorenz flow introduced by Guckenheimer \cite{gu} 
as a robust model which does not belong to classes in \cite{lr}.
Robinson \cite{ro} proved that the geometric Lorenz flow is preserved under $C^2$-perturbation.
Note that Tucker \cite{tu} showed that the flow exhibited by the system of differential equations 
in Lorenz \cite{lo} (the original Lorenz flow) is realized by some geometric Lorenz model.
Our main theorem (Theorem \ref{t_1}) of this paper proves that any geometric Lorenz flow satisfying 
the conditions in Section \ref{S_Pre} has a residual historic initial set.
On the other hand, Araujo et al \cite{appv} proved that, for any singular hyperbolic attractor of a 
3-dimensional flow, the historic initial set in the topological basin of attractor has zero 
Lebesgue measure.
Since the geometric Lorenz attractor is proved to be a singular hyperbolic attractor by \cite{mpp}, 
the historic initial set is negligible from the physical point of view.
But, Theorem \ref{t_1} implies that it is not the case in dynamical systems 
from the topological point of view.

Finally, we note that Dowker's result does not work in flow dynamics.
So, in our proof, we need to construct a residual historic initial set for 
the geometric Lorentz flow practically.

\subsection*{Acknowledgements}
The authors appreciate the hospitality of NCTS, Taiwan, where parts of this work were carried out.
The first and third
authors were partially supported by JSPS KAKENHI Grant Numbers 25400112 and 26400093, respectively, and the second author by MOST 104-2115-M-009-003-MY2.

\section{Preliminaries}\label{S_Pre}
First of all, we will review the geometric Lorentz flow briefly.
See \cite{wi1,gw,wi2} for details.

Consider the square $\Sigma=\{(x, y)\in \mathbb{R}^2\,;\, |x|, |y|\leq 1\}$ and the vertical 
segment $\Gamma=\{(0, y)\in \mathbb{R}^2\,;\, |y|\leq 1\}$ in $\Sigma$.
Let $\Sigma_\pm$ be the components of $\Sigma\setminus \Gamma$ with $\Sigma_\pm\ni (\pm1, 0)$.
A map $L:\Sigma\setminus \Gamma\to \Sigma$ is said to be a \emph{Lorenz map} if it is a 
piecewise $C^2$ diffeomorphism which has the form
\begin{equation}\label{eqn_Lxy}
L(x, y) =(\alpha(x),\beta(x, y)),
\end{equation}
where $\alpha:[-1, 1]\setminus \{0\}\to [-1, 1]$ is a piecewise $C^2$-function with symmetric property
$\alpha(-x) = -\alpha(x)$ and satisfying
\begin{equation}\label{eqn_alpha}
\lim_{x\rightarrow 0+}\alpha(x) = -1,\ \alpha(1) < 1,\ 
\lim_{x\rightarrow 0+}\alpha'(x) = \infty,\ \alpha'(x) >\sqrt2
\end{equation}
for any $x \in (0, 1]$ 
(see Figure \ref{fig_1}\,(a)), and $\beta:\Sigma\setminus \Gamma\to [-1, 1]$ is a contraction in the $y$-direction.
Moreover, it is required that the images $L(\Sigma_+)$, $L(\Sigma_-)$ are mutually disjoint cusps in
$\Sigma$, where the vertices $\boldsymbol{v}_+$, $\boldsymbol{v}_-$ of $L(\Sigma_\pm)$ are contained in $\{\mp 1\}\times[-1, 1]$ respectively (see Figure \ref{fig_1}\,(b)).
\begin{figure}[hbt]
\centering
\scalebox{0.6}{\includegraphics[clip]{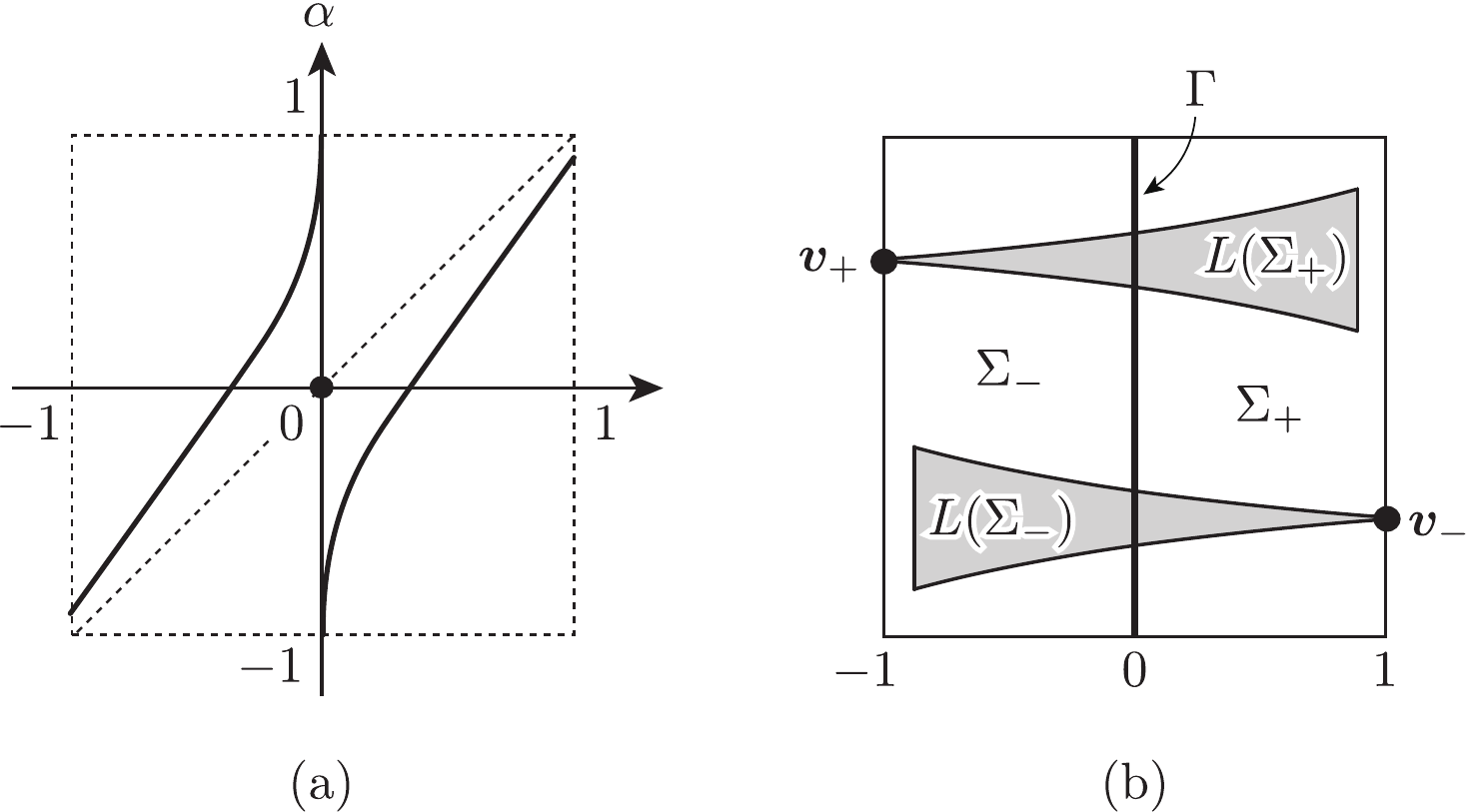}}
\caption{}
\label{fig_1}
\end{figure}

\begin{remark}[Historic behavior for the 1-dimensional Lorenz map]\label{r_Lorenz_map}
We denote the forward orbit of $x\in [-1,1]$ under $\alpha$ by $O^+(x,\alpha)$.
By Hofbauer \cite{ho}, the dynamics of $\alpha$ on $[-1,1]$ is described by a Markov partition on finite 
symbols.
Let $s'$ be a periodic sequence of these symbols and $s''$ a sequence such that, for the point $x''$ of $[-1,1]$ 
corresponding to $s''$, the partial averages $\frac1{n+1}\sum_{i=0}^n\delta_{\alpha^i(x'')}$ converge 
to the Lebesgue measure.
As in Takens \cite[Section 4]{ta1}, there exists a sequence $s_0$ of these symbols in which long initial 
segments of $s'$ and those of $s''$ appear alternately and such that, for the point $x_0$ of $I$ 
corresponding to $s_0$, $O^+(x_0,\alpha)$ is dense in $[-1,1]$ and has historic behavior.
Then, by Dowker \cite{do}, there exists a historic initial set which is residual in $[-1,1]$.
\end{remark}

We identify the square $\Sigma$ and any subset of $\Sigma$ with their images 
in $\mathbb{R}^3$ via the embedding $\iota:\mathbb{R}^2\to \mathbb{R}^3$ with $\iota(x,y)=(x,y,1)$.
A $C^2$-vector field $X_L$ on $\mathbb{R}^3$ is said to be a \emph{geometric Lorenz vector 
field controlled by} the Lorenz map $L:\Sigma\setminus \Gamma\to \Sigma$ (\ref{eqn_Lxy}) if it satisfies the 
following conditions (i) and (ii).
\begin{enumerate}[(i)]
\item
For any point $(x, y, z)$ in a neighborhood of the origin $\mathbf{0}$ of $\mathbb{R}^3$, 
$X_L$ is given by the differential equation
\begin{equation}\label{eqn_Lf}
\dot x=\lambda x,\quad \dot y=-\mu y,\quad \dot z=-\nu z
\end{equation}
for some $\lambda>0$, $\mu>\nu>0$.
Moreover, $\Gamma$ is contained in the stable manifold $W^s(\mathbf{0})$ of $\mathbf{0}$.
\item
All forward orbits of $X$ starting from $\Sigma\setminus \Gamma$ will return to 
$\Sigma$ and the first return map is $L$.
\end{enumerate}
Note then that $\mathbf{0}$ is a singular point (an equilibrium) of saddle type, the local
unstable manifold of $\mathbf{0}$ is tangent to the $x$-axis, and the local stable manifold 
of $\mathbf{0}$ is tangent to the $yz$-plane, see Figure \ref{fig_2}. 
\begin{figure}[hbt]
\centering
\scalebox{0.6}{\includegraphics[clip]{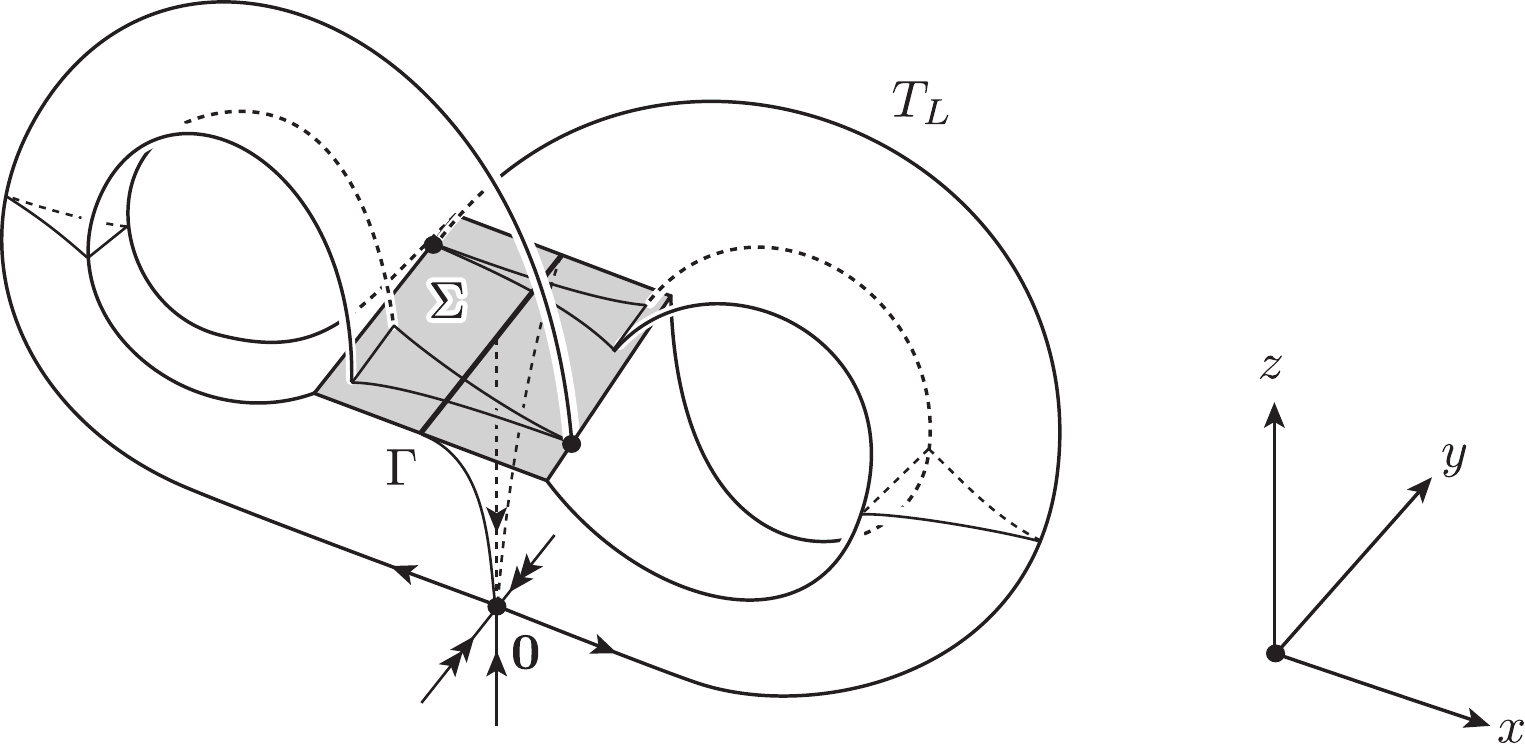}}
\caption{}
\label{fig_2}
\end{figure}
The $C^2$-map $\varphi_L :\mathbb{R}^3\times \mathbb{R}\to \mathbb{R}^3$ 
defined by $\varphi_L(\boldsymbol{x}, 0) = \boldsymbol{x}$ and 
$(\partial/\partial t)\varphi_L(\boldsymbol{x}, t) = X_L(\varphi_L(\boldsymbol{x}, t))$
is called the \emph{geometric Lorenz flow} associated with the vector field $X_L$.
The closure of $\bigcup_{\boldsymbol{z}\in \Sigma\setminus\Gamma}\varphi_L(\boldsymbol{z}, [0,\infty))$ in 
$\mathbb{R}^3$ is homeomorphic to a genus two handlebody as
illustrated in Figure \ref{fig_2}, which is called the \emph{trapping region} of 
$\varphi_L$ and denoted by $T_{\varphi_L}$ or $T_L$. 
Any forward orbit for $\varphi_L$ with its initial point in $T_L$ cannot escape from $T_L$.

For simplicity, we suppose moreover that the geometric Lorenz flow satisfies the differential equation 
(\ref{eqn_Lf}) on 
$$V_L=T_L\cap [-0.1,0.1]\times [-1,1]\times [0,1].$$ 
In fact, this assumption is not crucial and our subsequent argument still works for any geometric Lorenz flow which satisfies (\ref{eqn_Lf}) only on an arbitrarily small neighborhood of $\mathbf{0}$ in $T_L$.

\section{Historic behavior for the geometric Lorenz flow}\label{S_L_flow}

Let $\varphi_L$ be the geometric Lorenz flow given in the previous section.
Suppose that $g:T_L\to \mathbb{R}$ is a continuous function on the trapping region $T_L$. 
For $\tau>0$ and $\delta>0$, the forward orbit $\varphi_L(\boldsymbol{x},t)_{t\geq 0}$ emanating from $\boldsymbol{x}\in T_L$ 
is said to have $(\tau,\delta)$\emph{-historic 
behavior} with respect to $g$ if there exist $\tau_0$, $\tau_1$ with $\tau_0,\tau_1\geq \tau$ such that
$$\left|\frac1{\tau_0}\int_0^{\tau_0}g(\varphi_L(\boldsymbol{x},t))\,dt-\frac1{\tau_1}\int_0^{\tau_1}g(\varphi_L(\boldsymbol{x},t))\,dt \,\right|\geq \delta.$$
In particular, $\varphi_L(\boldsymbol{x},t)_{t\geq 0}$ has historic behavior if and only if 
there exists $\delta>0$ and a continuous function $g$ on $T_L$ such that, for any $\tau>0$, 
$\varphi_L(\boldsymbol{x},t)_{t\geq 0}$ has $(\tau,\delta)$-historic behavior with respect to $g$.

For any $\boldsymbol{y},\boldsymbol{z}\in T_L$ contained in the same forward orbit $\varphi_L(\boldsymbol{x},[0,\infty))$ with 
$\boldsymbol{x}\in\Sigma$, 
the sub-arc of $\varphi_L(\boldsymbol{x},[0,\infty))$ connecting $\boldsymbol{y}$ with $\boldsymbol{z}$ is denoted by $\Phi_L(\boldsymbol{y},\boldsymbol{z})$ 
or $\Phi_L(\boldsymbol{z},\boldsymbol{y})$.
Let $t_{\boldsymbol{x}}(\boldsymbol{y})\geq 0$ be the number with $\varphi_L(\boldsymbol{x},t_{\boldsymbol{x}}(\boldsymbol{y}))=\boldsymbol{y}$.
We set $\tau(\boldsymbol{y},\boldsymbol{z})=|t_{\boldsymbol{x}}(\boldsymbol{y})-t_{\boldsymbol{x}}(\boldsymbol{z})|$.
Note that $\tau(\boldsymbol{y},\boldsymbol{z})$ is independent of $\boldsymbol{x}\in \Sigma$ with $\varphi_L(\boldsymbol{x},[0,\infty))\ni \boldsymbol{y},\boldsymbol{z}$.
We also set $\tau(\boldsymbol{y},\boldsymbol{z})=\tau(\gamma)$ if $\gamma=\Phi_L(\boldsymbol{y},\boldsymbol{z})$.
Let $A$ be a compact subset of $T_L\setminus \{\mathbf{0}\}$ such that $\Phi_L(\boldsymbol{y},\boldsymbol{z})\cap A$ is a disjoint union of finitely many arcs $\gamma_1,\dots,\gamma_n$.
Then the total sum $\sum_{i=1}^n \tau(\gamma_i)$ is denoted by $\tau(\boldsymbol{y},\boldsymbol{z})|_A$.

Take a periodic point $x_{\mathrm{per}(2)}$ of $\alpha$ with period two. 
Let $\pi:\mathbb{R}^3\to \mathbb{R}^2$ be the orthogonal projection defined by $\pi(x,y,z)=(x,z)$.
For any point $\boldsymbol{x}$ of $\Sigma$ with $\boldsymbol{x}_{[1]}=
x_{\mathrm{per}(2)}$, 
the the image $Q(x_{\mathrm{per}(2)})=\pi(\varphi_L(\boldsymbol{x},[0,\infty)))$ is a closed curve in the $xz$-plane 
disjoint from the origin of $\mathbb{R}^2$.
Here we denote the first entry of an element $\boldsymbol{a}$ of $\mathbb{R}^3$ by $\boldsymbol{a}_{[1]}$, that is, 
$(a,b,c)_{[1]}=a$.
Though $Q(x_{\mathrm{per}(2)})$ depends on $x_{\mathrm{per}(2)}$, it is 
independent of the $y$-entry of $\boldsymbol{x}$.
See Figure \ref{fig_3}.
\begin{figure}[hbt]
\centering
\scalebox{0.6}{\includegraphics[clip]{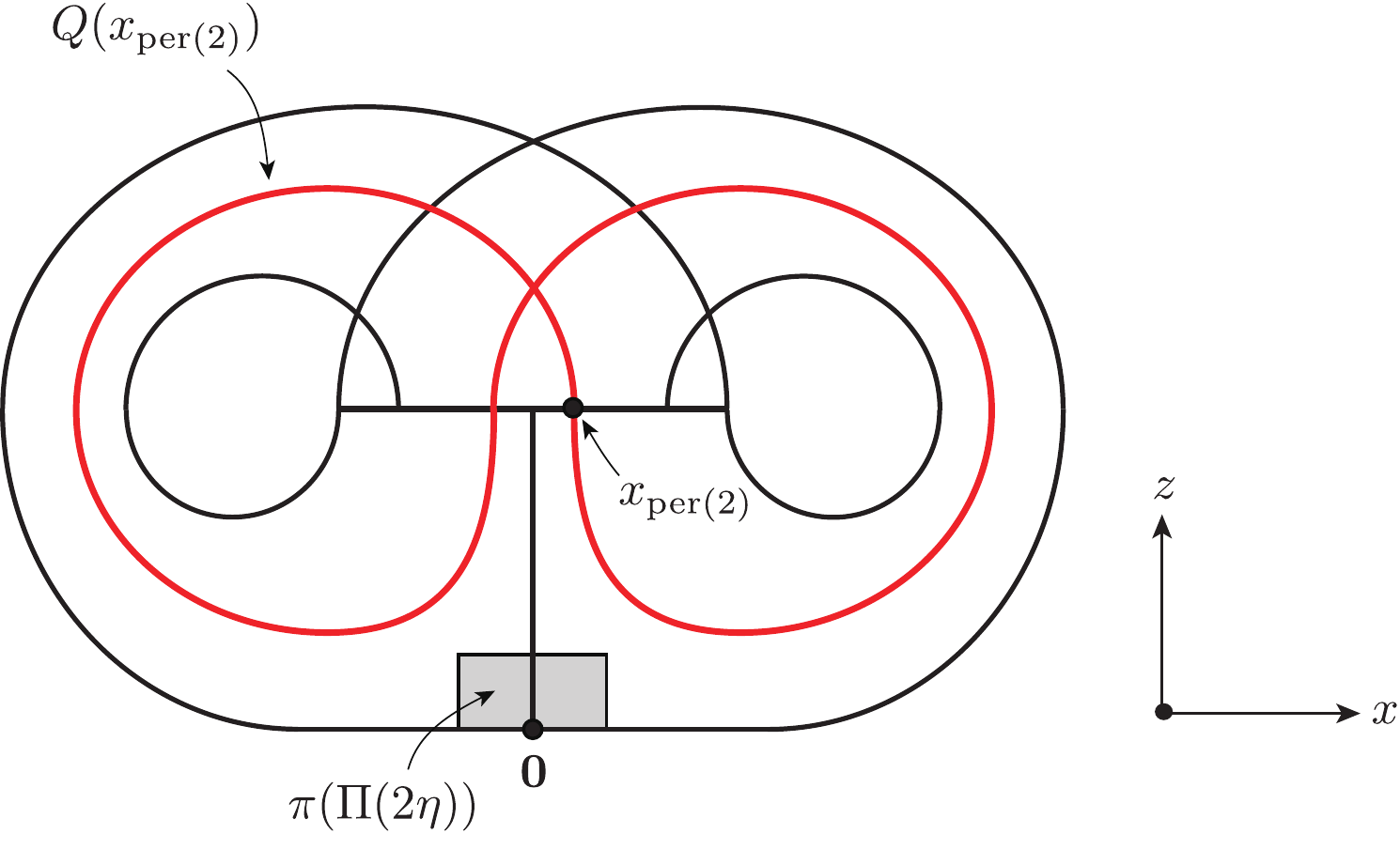}}
\caption{}
\label{fig_3}
\end{figure}

For $\eta>0$, the rectangular solid $[-\eta,\eta]^2\times [0,\eta]$ is denoted by $\Pi(\eta)$.
By taking the $\eta$ sufficiently small, one can suppose that 
$\Pi(\eta)\cap T_L\subset V_L$ and 
 $\Pi(2\eta)\cap \varphi_L(\boldsymbol{x},[0,\infty))=\emptyset$ for any $\boldsymbol{x}\in \Sigma$ with 
$\boldsymbol{x}_{[1]}=x_{\mathrm{per}(2)}$.
Consider the subspaces 
$$\partial_{\mathrm{side}}\Pi(\eta)=
\{-\eta,\eta\}\times [-\eta,\eta]\times [0,\eta]\quad\text{and}\quad
\partial_{\mathrm{top}}\Pi(\eta)=
[-\eta,\eta]^2\times \{\eta\}$$
of the boundary $\partial \Pi(\eta)$.

By the third equation of (\ref{eqn_Lf}) on $V_L$, any sub-arc $\gamma$ of an orbit 
connecting $\Sigma$ with $\partial_{\mathrm{top}}\Pi(\eta)$ in 
$T_L\cap [-\eta,\eta]\times [-1,1]\times [\eta,1]\subset V_L$ satisfies
\begin{equation}\label{eqn_tau_C'}
\tau(\gamma)=\frac{\log(\eta^{-1})}{\nu}.
\end{equation}

Note that the Lorenz flow does not have singular points in the compact set $\overline{T_L\setminus \Pi(\eta)}$, where $\overline A$ denotes the closure of a subset $A$ of $T_L$.
It follows from the fact that there exists a constant $C>0$ satisfying  
\begin{equation}\label{eqn_tau}
\tau(\boldsymbol{x},L(\boldsymbol{x}))|_{\,\overline{T_L\setminus \Pi(\eta)}}\leq C
\end{equation}
for any $\boldsymbol{x}\in\Sigma\setminus \Gamma$.

\medskip

The following is our main theorem in this paper.

\begin{theorem}\label{t_1}
There exists a residual subset $\mathcal{H}$ of $\Sigma$ such that, 
for any $\boldsymbol{x}\in \mathcal{H}$, the forward orbit $\varphi_L(\boldsymbol{x},t)_{t\geq 0}$ 
has historic behavior. 
\end{theorem}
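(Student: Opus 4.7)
The plan is to realize $\mathcal{H}$ as a countable intersection of open dense subsets of $\Sigma$, following the authors' hint that Dowker's theorem has no direct flow analogue and must be replaced by a concrete construction.

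Fix a continuous test function $g\colon T_L\to[0,1]$ with $g\equiv 1$ on $\Pi(\eta_0)$ for some $0<\eta_0<\eta$ and $g\equiv 0$ outside $\Pi(\eta)$. By the choice of $\eta$, this function vanishes on a neighborhood of the lift to $T_L$ of $Q(x_{\mathrm{per}(2)})$, and in particular on the period-two orbit through $x_{\mathrm{per}(2)}$. The key geometric estimate, obtained by integrating (\ref{eqn_Lf}) on $V_L$ and combining with (\ref{eqn_tau_C'})--(\ref{eqn_tau}), is that for $\boldsymbol{x}_k:=L^k(\boldsymbol{x})$ with $\xi_k:=|(\boldsymbol{x}_k)_{[1]}|$ sufficiently small, the return time satisfies
\[
\tau(\boldsymbol{x}_k,\boldsymbol{x}_{k+1})=-\lambda^{-1}\log\xi_k+O(1),
\]
and nearly the entire near-origin portion of this arc lies in $\{g=1\}$. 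For $\xi_k$ bounded below, the round trip has bounded length and $g$ integrates to $O(1)$ along it. Hence a single very small $\xi_k$ can push the partial time average $\tau^{-1}\int_0^\tau g(\varphi_L(\boldsymbol{x},t))\,dt$ arbitrarily close to $1$ at a suitable stopping time, while long blocks of iterates shadowing the period-two orbit through $x_{\mathrm{per}(2)}$ push it arbitrarily close to $0$.

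For each $n\in\mathbb{N}$ and a fixed $\delta\in(0,\tfrac{1}{3})$, set
\[
U_n=\bigl\{\boldsymbol{x}\in\Sigma:\varphi_L(\boldsymbol{x},\cdot)\text{ has }(n,\delta)\text{-historic behavior with respect to }g\bigr\}.
\]
For each pair $(\tau_0,\tau_1)$, the map $\boldsymbol{x}\mapsto\tau_j^{-1}\int_0^{\tau_j}g(\varphi_L(\boldsymbol{x},t))\,dt$ is continuous on $\Sigma$, so $U_n$ is open. For density, let $V\subset\Sigma$ be a nonempty open set. The hypotheses on $\alpha$ in (\ref{eqn_alpha}) make $\alpha$ uniformly expanding and topologically mixing on $[-1,1]$; combined with the strict contraction of $\beta$ in $y$, this means the Markov cylinders of $L$ refine the topology of $\Sigma$. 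One can therefore prescribe, inside $V$ and past any given time, an initial itinerary for $\boldsymbol{x}$ that spends a long block shadowing $\{\pm x_{\mathrm{per}(2)}\}$ (driving the partial $g$-average below $\delta$) and then performs a single return with $\xi_k$ as small as required to drive the next partial $g$-average above $1-\delta$. This yields $V\cap U_n\neq\emptyset$, and is the flow-dynamical counterpart of Takens' construction recalled in Remark~\ref{r_Lorenz_map}.

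The main obstacle is this last density step: one must simultaneously (i) keep $\boldsymbol{x}$ in $V$, (ii) make both witnessing times $\tau_0,\tau_1$ exceed the prescribed $n$, (iii) achieve an oscillation strictly larger than $\delta$ between them, and (iv) keep these properties robust under small perturbations of $\boldsymbol{x}$. Once density and openness of each $U_n$ are in hand, Baire's theorem gives $\mathcal{H}:=\bigcap_{n\geq 1}U_n$ residual in $\Sigma$, and every $\boldsymbol{x}\in\mathcal{H}$ has $(\tau,\delta)$-historic behavior for every $\tau>0$, hence historic behavior with respect to $g$.
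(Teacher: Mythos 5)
Your overall strategy is exactly the paper's: exhibit $\mathcal{H}$ as a countable intersection of open dense sets of initial conditions with $(n,\delta)$-historic behavior for the same test function $g$ (supported near $\mathbf{0}$, equal to $1$ on a smaller box) and the same two competing regimes --- deep passes near the singularity, where the passage time $\sim\lambda^{-1}\log(\xi_k^{-1})$ is spent almost entirely in $\{g=1\}$, versus long shadowing of the orbit over $x_{\mathrm{per}(2)}$, which avoids $\Pi(2\eta)$ so that $g$ vanishes there. The paper packages this as Lemma \ref{l_1} and then takes $\mathcal{U}_N$ to be a union of explicit open disks; your $U_n$ plays the same role.

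Two points where your write-up falls short of a proof. First, openness: with the inequality $\geq\delta$ in the definition of $(n,\delta)$-historic behavior, $U_n$ is a union over pairs $(\tau_0,\tau_1)$ of \emph{closed} sets, which need not be open; you should either define $U_n$ with a strict inequality, or do what the paper does, namely produce at each point an open disk on which a margin strictly larger than the target is achieved (the paper gets $11/20$ at the center $\boldsymbol{y}_0$ and retains $1/2$ on a small disk around it by continuity of $\boldsymbol{x}\mapsto\int_0^{\tau}g(\varphi_L(\boldsymbol{x},t))\,dt$ for the two fixed times). Second, the density step that you yourself call ``the main obstacle'' is asserted rather than carried out, and it is precisely the content of Lemma \ref{l_1}: the choice of the exponent $\sigma$ in (\ref{eqn_loge}) guarantees simultaneously that the witnessing times exceed $N$ and that the time spent in $\Pi(\eta)$ dominates the time outside it (via (\ref{eqn_tau_C'}), (\ref{eqn_tau}), (\ref{eqn_n_geq}) and the chain (\ref{eqn_tauy2y3})), yielding the average $\geq 3/4$ at time $\tau(\boldsymbol{y}_0,\boldsymbol{y}_3)$ and $\leq 1/5$ at time $\tau(\boldsymbol{y}_0,\boldsymbol{y}_5)\geq 5\tau(\boldsymbol{y}_0,\boldsymbol{y}_4)$; and the fact that the itinerary constrains only the $x$-coordinate is what lets the resulting set meet every open $V$. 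Note also that you run the two regimes in the opposite order to the paper (periodic block first, deep pass second); this can be made to work, but then the depth $\xi_k$ of the single pass must be chosen \emph{after}, and as a function of, the length of the periodic block, since the $\{g=1\}$ time must dominate an arbitrarily long prefix rather than the short, $\varepsilon$-controlled prefix the paper arranges. None of these are wrong ideas, but as written the quantitative heart of the argument is missing.
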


Here we fix a continuous function $g:T_L\to \mathbb{R}$ satisfying the following condition.
\begin{enumerate}[(1)]
\item
$0\leq g(\boldsymbol{x})\leq 1$ for any $\boldsymbol{x}\in T_L$.
\item
The support of $g$ is contained in $\Pi(2\eta)\cap T_L$ and 
$g(\boldsymbol{x})=1$ on $\Pi(\eta)\cap T_L$.
\end{enumerate}

\medskip

The following lemma is crucial in the proof of Theorem \ref{t_1}.

\begin{lemma}\label{l_1}
For any positive integer $N$, any $0<\varepsilon<1$ and $\boldsymbol{x}_0\in \Sigma$, 
there exists an open disk $U_{(\boldsymbol{x}_0,N,\varepsilon)}$ contained in the $\varepsilon$-neighborhood of $\boldsymbol{x}_0$ in $\Sigma$ and satisfying the following condition.
\begin{enumerate}[{\rm (H$_N$)}]
\item
For any $\boldsymbol{z}\in U_{(\boldsymbol{x}_0,N,\varepsilon)}$, $\varphi_L(\boldsymbol{z},t)_{t\geq 0}$ has $(N,1/2)$-historic behavior with respect to $g$.
\end{enumerate}
\end{lemma}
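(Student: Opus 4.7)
The plan is to build $U_{(\boldsymbol{x}_0,N,\varepsilon)}$ as the intersection of two nested open conditions inside the $\varepsilon$-neighborhood of $\boldsymbol{x}_0$ in $\Sigma$, each obtained by a topological mixing argument on the one-dimensional factor $\alpha$, so that every $\boldsymbol{z}$ in the resulting disk has a Poincar\'e itinerary of a tailored shape. First, the orbit shadows the period-$2$ orbit of $\alpha$ through $x_{\mathrm{per}(2)}$ for so many returns that the flow trajectory stays in a small tube around $Q(x_{\mathrm{per}(2)})$, hence outside $\Pi(2\eta) \supset \mathrm{supp}(g)$; this will drop the partial time-average of $g$ at some time $\tau_0(\boldsymbol{z}) \geq N$ below $1/4$. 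Then the orbit is driven back to $\Sigma$ at a point whose $x$-coordinate is extremely small, so by the linear flow (\ref{eqn_Lf}) the next return spends a very long time inside $\Pi(\eta)$ where $g \equiv 1$; this will push the partial average at a later time $\tau_1(\boldsymbol{z}) \geq N$ above $3/4$. The gap $3/4 - 1/4 = 1/2$ then yields the claimed $(N,1/2)$-historic behavior.

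\smallskip

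For Step~1, topological mixing of $\alpha$---standard from $\alpha' > \sqrt{2}$, (\ref{eqn_alpha}) and the Markov structure recalled in Remark~\ref{r_Lorenz_map}---produces, for any prescribed $\epsilon_1 > 0$, an integer $k_1$ and an open disk $W_1$ inside the $\varepsilon$-neighborhood of $\boldsymbol{x}_0$ with $|L^{k_1}(\boldsymbol{z})_{[1]} - x_{\mathrm{per}(2)}| < \epsilon_1$ for every $\boldsymbol{z} \in W_1$. Since $\alpha$ expands by at most $\max \alpha'$ per step, choosing $\epsilon_1 < \eta (\max \alpha')^{-2M}$ keeps $L^{k_1+j}(\boldsymbol{z})_{[1]}$ within $\eta$ of $\alpha^j(x_{\mathrm{per}(2)})$ for $j = 0, \dots, 2M$; combined with the contraction of $\beta$ in $y$ and continuity of $\varphi_L$, this forces the arcs $\Phi_L(L^{k_1+j}(\boldsymbol{z}), L^{k_1+j+1}(\boldsymbol{z}))$ into a small tube around $Q(x_{\mathrm{per}(2)})$, hence outside $\Pi(2\eta)$. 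Setting $\tau_0(\boldsymbol{z})$ as the flow time to reach $L^{k_1+2M}(\boldsymbol{z})$ and $\tau_0'(\boldsymbol{z})$ the flow time to reach $L^{k_1}(\boldsymbol{z})$, one gets $\int_0^{\tau_0} g\,dt \leq \tau_0'(\boldsymbol{z})$ (using $g \leq 1$ before the shadowing and $g \equiv 0$ along it). Since $\tau_0'$ is uniformly bounded on $\overline{W_1}$ while $\tau_0 - \tau_0' \geq 2M\,T_{\mathrm{per}}^{\min}$ for a positive lower bound on the tube return times, choosing $M$ large delivers $\tau_0 \geq N$ and average $\leq 1/4$.

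\smallskip

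For Step~2, topological mixing again yields an integer $m_0$ such that the $\alpha^{k_1+2M+m_0}$-image of the $x$-projection of $W_1$ covers $[-1,1]$; for any small parameter $\delta^*$ with $0 < \delta^* < \eta^{1+\lambda/\nu}$, one then extracts an open sub-disk $W_2 \subset W_1$ with $|L^{k_1+2M+m_0}(\boldsymbol{z})_{[1]}| < \delta^*$ for every $\boldsymbol{z} \in W_2$. The next return of $\varphi_L(\boldsymbol{z}, \cdot)$ enters $\Pi(\eta)$ through $\partial_{\mathrm{top}}\Pi(\eta)$ after flow time $\log(\eta^{-1})/\nu$ by (\ref{eqn_tau_C'}), and by $\dot x = \lambda x$ stays inside $\Pi(\eta)$ for an additional $T_{\mathrm{deep}}(\boldsymbol{z}) \geq \lambda^{-1}\log(\eta/\delta^*)$ on which $g \equiv 1$; setting $\tau_1(\boldsymbol{z})$ to be the exit time of this deep passage yields $\int_0^{\tau_1} g\,dt \geq T_{\mathrm{deep}}$, while (\ref{eqn_tau}) bounds the total time outside $\Pi(\eta)$ before $\tau_1$ by $(k_1+2M+m_0)\,C$. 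Setting $U_{(\boldsymbol{x}_0,N,\varepsilon)} := W_2$ completes the construction.

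\smallskip

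The delicate point is the uniform quantitative matching in Step~2: the average $\int_0^{\tau_1} g\,dt / \tau_1 \geq 3/4$ must be forced on all of $W_2$. The saving observation is that $m_0$ is obtained from topological mixing on the fixed open set $W_1$ and does \emph{not} depend on $\delta^*$, so the ``non-deep'' bounds $\tau_0^{\max}$, $(k_1+2M+m_0)\,C$ and $\log(\eta^{-1})/\nu$ are all fixed once $k_1$ and $M$ are chosen; taking $\delta^*$ exponentially small in these constants makes $T_{\mathrm{deep}} \sim \lambda^{-1}\log(1/\delta^*)$ dominate and simultaneously forces $\tau_1 \geq T_{\mathrm{deep}} \geq N$. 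Combined with Step~1, this yields $(N,1/2)$-historic behavior uniformly on $W_2$.
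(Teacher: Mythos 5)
Your proof is correct in its essential mechanism, and it is the same mechanism as the paper's: a return to $\Sigma$ with exponentially small $x$-coordinate forces, via the linear field (\ref{eqn_Lf}), a sojourn in $\Pi(\eta)$ (where $g\equiv 1$) long enough to push one partial average up, while a long stretch near the period-two curve $Q(x_{\mathrm{per}(2)})$ (disjoint from the support of $g$) pulls another partial average down. The implementation differs in two ways. First, you run the phases in the opposite order (period-two phase first, deep passage second); the paper does the deep passage first and then lands \emph{exactly} on a preimage of $x_{\mathrm{per}(2)}$, so that the entire forward tail projects onto $Q(x_{\mathrm{per}(2)})$ and $g$ vanishes along it for all future time --- no shadowing estimate is needed, and the open disk is produced only at the very end by continuity, in the initial condition, of the two integrals over the fixed finite times $\tau(\boldsymbol{y}_0,\boldsymbol{y}_3)$ and $\tau(\boldsymbol{y}_0,\boldsymbol{y}_5)$. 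Second, you build the open set directly as a nested intersection of open conditions, which forces you to shadow the period-two orbit for $2M$ returns uniformly over an open set; this costs a quantitative shadowing estimate that the paper never has to make. Your route is more constructive in flavor; the paper's is shorter because the ``exact landing plus continuity at the end'' trick eliminates all the uniformity bookkeeping in the period-two phase.

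Two points in your write-up need repair, though neither is fatal. (a) You control the growth of the shadowing error by $(\max \alpha')^{2M}$, but $\alpha'$ is unbounded, since $\lim_{x\to 0+}\alpha'(x)=\infty$ by (\ref{eqn_alpha}); you must replace $\max\alpha'$ by a bound $\Lambda$ for $\alpha'$ on a neighborhood of the period-two orbit, which is legitimate because that orbit is bounded away from $0$. (b) In Step 1 you need $2M\,T^{\min}_{\mathrm{per}}\geq 3\tau_0^{\max}$ with $\tau_0^{\max}=\sup_{\overline{W_1}}\tau_0'$, and as written $k_1$ (hence $W_1$, hence $\tau_0^{\max}$) appears to depend on $\epsilon_1$, which depends on $M$ --- a potential circularity. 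You should state for Step 1 the same ``saving observation'' you make for $m_0$ and $\delta^*$ in Step 2: $k_1$ can be chosen so that $\alpha^{k_1}$ of a monotone branch of the interval $I((\boldsymbol{x}_0)_{[1]},\varepsilon)$ covers a \emph{fixed} neighborhood of $x_{\mathrm{per}(2)}$, independently of $\epsilon_1$; only $W_1$ shrinks as $\epsilon_1\to 0$, so $\tau_0^{\max}$ is bounded independently of $M$, after also arranging that $\overline{W_1}$ avoids $\bigcup_{i=0}^{k_1-1}L^{-i}(\Gamma)$ so that the first $k_1$ return times are finite and continuous on it.
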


Here we note that the disk $U_{(\boldsymbol{x}_0,N,\varepsilon)}$ is not necessarily required to have $x_0$ 
as an element.

\begin{proof}
Since $0<\varepsilon<1$, $\lim_{\sigma\rightarrow\infty}\log (\varepsilon^{-\sigma})=\infty$.
Thus one can have a constant $\sigma\geq 1$ satisfying 
\begin{equation}\label{eqn_loge}
\frac1{\lambda}\bigl(\log (\varepsilon^{-\sigma})+\log \eta\bigr)\geq \max\left\{N,\frac{6C\log(\varepsilon^{-1})}{\log 2}+\frac{4\log(\eta^{-1})}{\nu}\right\}.
\end{equation}

Set $(\boldsymbol{x}_0)_{[1]}=x_0$ and consider the interval $I(x_0,\varepsilon)=[x_0-\varepsilon,x_0+\varepsilon]$ in 
the $x$-axis.
Let $n_0\in\mathbb{N}$ be the smallest non-negative integer such that $\alpha^{n_0}(I(x_0,\varepsilon))$ contains $0$.
We denote the length of an interval $I$ in the $x$-axis by $\ell(I)$.
Since $\ell(\alpha^{n_0}(I(x_0,\varepsilon))\leq 2$ and $|\alpha'(x)|> \sqrt2$ for any $x\in [-1,1]$ by (\ref{eqn_alpha}), we have $2\varepsilon(\sqrt2)^{n_0}\leq 2$ or equivalently 
\begin{equation}\label{eqn_n_geq}
n_0\leq \frac{2\log (\varepsilon^{-1})}{\log 2}.
\end{equation}

On the other hand, since $\ell(\alpha^{n_0}(I(x_0,\varepsilon)))$ is at least $2\varepsilon$, $\alpha^{n_0}(I(x_0,\varepsilon))$ contains 
either $[\varepsilon^\sigma/2,\varepsilon^\sigma]$ or $[-\varepsilon^\sigma,-\varepsilon^\sigma/2]$, say $[\varepsilon^\sigma/2,\varepsilon^\sigma]$.
Then $I(x_0,\varepsilon)$ contains an interval $J_0$ with $\alpha^{n_0}(J_0)=[\varepsilon^\sigma/2,\varepsilon^\sigma]$.
For any $\boldsymbol{y}_0\in \Sigma$ with $(\boldsymbol{y}_0)_{[1]}\in J_0$, set $\boldsymbol{y}_1=L^{n_0}(\boldsymbol{y}_0)$.
Let $\boldsymbol{y}_3$ be the first point in  
$\varphi_L(\boldsymbol{y}_1,[0,\infty))$ meeting $\partial_{\mathrm{side}}\Pi(\eta)$.
From our setting of these points, we have a unique intersection point $\boldsymbol{y}_2$ of $\Phi_L(\boldsymbol{y}_1,\boldsymbol{y}_3)$ with $\partial_{\mathrm{top}}\Pi(\eta)$.
Then
\begin{equation}\label{eqn_y0y3}
\begin{split}
\tau(\boldsymbol{y}_0,\boldsymbol{y}_3)&=\tau(\boldsymbol{y}_0,\boldsymbol{y}_1)+\tau(\boldsymbol{y}_1,\boldsymbol{y}_2)+\tau(\boldsymbol{y}_2,\boldsymbol{y}_3)\\
&=\tau(\boldsymbol{y}_0,\boldsymbol{y}_1)|_{T_L\cap \Pi(\eta)}+\tau(\boldsymbol{y}_0,\boldsymbol{y}_1)|_{\,\overline{T_L\setminus \Pi(\eta)}} +\tau(\boldsymbol{y}_1,\boldsymbol{y}_2)+\tau(\boldsymbol{y}_2,\boldsymbol{y}_3).
\end{split}
\end{equation}
By (\ref{eqn_tau_C'}),
$$\tau(\boldsymbol{y}_1,\boldsymbol{y}_2)=\frac{\log(\eta^{-1})}{\nu}.$$
By (\ref{eqn_tau}) and (\ref{eqn_n_geq}), 
$$\tau(\boldsymbol{y}_0,\boldsymbol{y}_1)|_{\,\overline{T_L\setminus \Pi(\eta)}}\leq \frac{2C\log (\varepsilon^{-1})}{\log 2}.$$
Since $\varepsilon^\sigma/2\leq (\boldsymbol{y}_1)_{[1]}\leq \varepsilon^\sigma$ and $(\boldsymbol{y}_3)_{[1]}=\eta$, 
it follows from the first equation of (\ref{eqn_Lf}) and (\ref{eqn_loge}) that
\begin{equation}\label{eqn_tauy1y3}
\tau(\boldsymbol{y}_1,\boldsymbol{y}_3)\geq \frac1{\lambda}\bigl(\log (\varepsilon^{-\sigma})+\log \eta\bigr)\geq N.
\end{equation}
By the former inequality of (\ref{eqn_tauy1y3}) together with (\ref{eqn_loge}),
\begin{equation}\label{eqn_tauy2y3}
\begin{split}
\tau(\boldsymbol{y}_2,\boldsymbol{y}_3)&=\tau(\boldsymbol{y}_1,\boldsymbol{y}_3)-\tau(\boldsymbol{y}_1,\boldsymbol{y}_2)\geq 
\frac1{\lambda}\bigl(\log (\varepsilon^{-\sigma})+\log \eta\bigr)-\frac{\log(\eta^{-1})}{\nu}\\
&\geq 3\left(\frac{2C\log(\varepsilon^{-1})}{\log 2}+\frac{\log(\eta^{-1})}{\nu}\right)\\
&\geq 3\bigl(\tau(\boldsymbol{y}_0,\boldsymbol{y}_1)_{\,\overline{T_L\setminus \Pi(\eta)}}+\tau(\boldsymbol{y}_1,\boldsymbol{y}_2)\bigr).
\end{split}
\end{equation}
Since $\Phi_L(\boldsymbol{y}_1,\boldsymbol{y}_2)\cap T_L\cap \Pi(\eta)$ is a single point of $\partial_{\mathrm{top}}\Pi(\eta)$, we have
$\tau(\boldsymbol{y}_1,\boldsymbol{y}_2)|_{T_L\cap \Pi(\eta)}=0$.
Since $\Phi_L(\boldsymbol{y}_2,\boldsymbol{y}_3)\subset T_L\cap \Pi(\eta)$, 
$\tau(\boldsymbol{y}_2,\boldsymbol{y}_3)|_{T_L\cap \Pi(\eta)}=\tau(\boldsymbol{y}_2,\boldsymbol{y}_3)$.
This shows that 
$$\tau(\boldsymbol{y}_1,\boldsymbol{y}_3)|_{T_L\cap \Pi(\eta)}=\tau(\boldsymbol{y}_1,\boldsymbol{y}_2)|_{T_L\cap \Pi(\eta)}+\tau(\boldsymbol{y}_2,\boldsymbol{y}_3)|_{T_L\cap \Pi(\eta)}=\tau(\boldsymbol{y}_2,\boldsymbol{y}_3).$$
It follows from (\ref{eqn_y0y3}) that
\begin{align*}
\frac{\tau(\boldsymbol{y}_0,\boldsymbol{y}_3)}{\tau(\boldsymbol{y}_0,\boldsymbol{y}_3)|_{T_L\cap \Pi(\eta)}}
&=\frac{\tau(\boldsymbol{y}_0,\boldsymbol{y}_3)}{\tau(\boldsymbol{y}_0,\boldsymbol{y}_1)|_{T_L\cap \Pi(\eta)}+\tau(\boldsymbol{y}_2,\boldsymbol{y}_3)}\\
&=1+\frac{\tau(\boldsymbol{y}_0,\boldsymbol{y}_1)|_{\,\overline{T_L\setminus \Pi(\eta)}}+\tau(\boldsymbol{y}_1,\boldsymbol{y}_2)}{\tau(\boldsymbol{y}_0,\boldsymbol{y}_1)|_{T_L\cap \Pi(\eta)}+\tau(\boldsymbol{y}_2,\boldsymbol{y}_3)}\\
&\leq 1+\frac{\tau(\boldsymbol{y}_0,\boldsymbol{y}_1)|_{\,\overline{T_L\setminus \Pi(\eta)}}+\tau(\boldsymbol{y}_1,\boldsymbol{y}_2)}{\tau(\boldsymbol{y}_2,\boldsymbol{y}_3)}\leq \frac43.
\end{align*}
Since $g$ is a non-negative continuous function with $g(\boldsymbol{x})=1$ on $T_L\cap \Pi(\eta)$,
\begin{equation}\label{eqn_3/4}
\frac1{\tau(\boldsymbol{y}_0,\boldsymbol{y}_3)}\int_0^{\tau(\boldsymbol{y}_0,\boldsymbol{y}_3)}
g(\varphi_L(\boldsymbol{y}_0,t))\,dt\geq \frac{\tau(\boldsymbol{y}_0,\boldsymbol{y}_3)|_{T_L\cap \Pi(\eta)}}{\tau(\boldsymbol{y}_0,\boldsymbol{y}_3)}\geq \frac34.
\end{equation}

Since $\bigcup_{n=1}^\infty\alpha^{-n}(x_{\mathrm{per}(2)})$ is dense in $[-1,1]$, 
there exists an $n_1\in \mathbb{N}$ such that the interior of $\alpha^{n_1}([\varepsilon^\sigma/2,\varepsilon^\sigma])
=\alpha^{n_0+n_1}(J_0)$ contains $x_{\mathrm{per}(2)}$.
There exists a closed subinterval $J_1$ of $[\varepsilon^\sigma/2,\varepsilon^\sigma]$ such that $\mathrm{Int}\,\alpha^{n_1}(J_1)\ni x_{\mathrm{per}(2)}$ 
and $\bigcup_{i=1}^{n_1}\alpha^i(J_1)\cap \{0\}=\emptyset$.
There exists a point $\boldsymbol{y}_0$ in the interior of the $\varepsilon$-neighborhood of $\boldsymbol{x}_0$ 
such that the points $\boldsymbol{y}_1=L^{n_0}(\boldsymbol{y}_0)$ and $\boldsymbol{y}_4=L^{n_0+n_1}(\boldsymbol{y}_0)$ satisfy 
$(\boldsymbol{y}_1)_{[1]}\in \mathrm{Int}J_1$ and $(\boldsymbol{y}_4)_{[1]}=x_{\mathrm{per}(2)}$ respectively.
Take a point $\boldsymbol{y}_5$ in $\varphi_L(\boldsymbol{y}_4,t)_{t\geq 0}$ with 
$$\tau(\boldsymbol{y}_0,\boldsymbol{y}_5)\geq 5\tau(\boldsymbol{y}_0,\boldsymbol{y}_4).$$
Since $\psi_L(\boldsymbol{y}_4,[0,\infty))\cap \Pi(2\eta)=\emptyset$, $g(\boldsymbol{x})=0$ on $\Phi_L(\boldsymbol{y}_4,\boldsymbol{y}_5)$.
Since moreover $0\leq g(\boldsymbol{x})\leq 1$ on $T_L$, we have 
$$\frac1{\tau(\boldsymbol{y}_0,\boldsymbol{y}_5)}\int_0^{\tau(\boldsymbol{y}_0,\boldsymbol{y}_5)}g(\psi_L(\boldsymbol{y}_0,t))\,dt
\leq \frac{\tau(\boldsymbol{y}_0,\boldsymbol{y}_4)}{\tau(\boldsymbol{y}_0,\boldsymbol{y}_5)}\leq \frac15.$$
By this inequality together with (\ref{eqn_3/4}), we have
\begin{align*}
\frac1{\tau(\boldsymbol{y}_0,\boldsymbol{y}_3)}\int_0^{\tau(\boldsymbol{y}_0,\boldsymbol{y}_3)}& g(\psi_L(\boldsymbol{y}_0,t))\,dt-
\frac1{\tau(\boldsymbol{y}_0,\boldsymbol{y}_5)}\int_0^{\tau(\boldsymbol{y}_0,\boldsymbol{y}_5)}g(\psi_L(\boldsymbol{y}_0,t))\,dt
\\
&\geq \frac34-\frac15=\frac{11}{20}.
\end{align*}
Then one can have a small open disk $U(\boldsymbol{x}_0,N,\varepsilon)$ centered at $\boldsymbol{y}_0$ and contained in the 
$\varepsilon$-neighborhood of $\boldsymbol{x}_0$ such that, for any $\boldsymbol{z}\in U(\boldsymbol{x}_0,N,\varepsilon)$, 
$$
\frac1{\tau(\boldsymbol{y}_0,\boldsymbol{y}_3)}\int_0^{\tau(\boldsymbol{y}_0,\boldsymbol{y}_3)} g(\psi_L(\boldsymbol{z},t))\,dt-
\frac1{\tau(\boldsymbol{y}_0,\boldsymbol{y}_5)}\int_0^{\tau(\boldsymbol{y}_0,\boldsymbol{y}_5)}g(\psi_L(\boldsymbol{z},t))\,dt
\geq \frac12.
$$
By (\ref{eqn_tauy1y3}), we also have 
$$\tau(\boldsymbol{y}_0,\boldsymbol{y}_5)>\tau(\boldsymbol{y}_0,\boldsymbol{y}_3)>\tau(\boldsymbol{y}_1,\boldsymbol{y}_3)\geq N.$$
It follows that $\psi_L(\boldsymbol{z},t)_{t\geq 0}$ has $(N,1/2)$-historic behavior 
with respect to $g$.
\end{proof}

\begin{proof}[Proof of Theorem \ref{t_1}]
For any $N,m\in \mathbb{N}$ and any $\boldsymbol{x}\in \Sigma\setminus \Gamma$,  
let $U_{(\boldsymbol{x},N,1/(m+1))}$ be 
the open disk given in Lemma \ref{l_1} with $\varepsilon=1/(m+1)$.
Then the union $\mathcal{U}_N=\bigcup_{m\in \mathbb{N},\boldsymbol{x}\in \Sigma\setminus \Gamma}U_{(\boldsymbol{x},N,1/(m+1))}$ is an open dense subset of $\Sigma$, and hence 
$\mathcal{H}=\bigcap_{N=1}^\infty \mathcal{U}_N$ is a residual subset of $\Sigma$.
Since each element $\boldsymbol{z}$ of $\mathcal{H}$ satisfies the condition (H$_N$) of Lemma \ref{l_1} 
for any $N\in \mathbb{N}$, 
the forward orbit $\varphi_L(\boldsymbol{z},t)_{t\geq 0}$ has historic behavior.
This completes the proof.
\end{proof}

\end{document}